\documentclass[12pt]{amsart}
\usepackage[latin1]{inputenc}
\usepackage{mathptmx}
\usepackage{tikz}
\usepackage{enumerate}
\usepackage{hyperref}

\newtheorem{theorem}{Theorem}[section]
\newtheorem{lemma}[theorem]{Lemma}

\newtheorem{proposition}[theorem]{Proposition}
\newtheorem{remark}[theorem]{Remark}

\theoremstyle{definition}

\newtheorem{example}[theorem]{Example}
\newtheorem{remark/example}[theorem]{Remark/Example}

\let\oldlabel=\label
\def\prellabel{\marginparsep=1em\marginparwidth=44pt
 \def\label##1{\oldlabel{##1}\ifmmode\else\ifinner\else
 \marginpar{{\footnotesize\ \\ \tt
 ##1}}\fi\fi}}

\prellabel

\numberwithin{equation}{section}

\def\PP{ {\bf P} }
\def\NN{ {\bf N} }
\def\ZZ{ {\bf Z} }
\def\QQ{ {\bf Q} }

\def\aa{{\bf a}}
\def\bb{{\bf b}}

\def\O{\mathcal O}
\def\V{\mathcal V}
\def\W{\mathcal W}
\def\T{\mathcal T}

\newcommand{\GCD}{\operatorname{GCD}}

\newcommand{\HS}{\operatorname{HS}}

\newcommand{\Ext}{\operatorname{Ext}}
\newcommand{\Hom}{\operatorname{Hom}}
\newcommand{\Sym}{\operatorname{Sym}}

\numberwithin{equation}{section}

\begin{document}
\sloppy

\title{A remark on hyperplane sections of rational normal scrolls}
\author{Aldo Conca}
\address{ Dipartimento di Matematica,
Universit\`a degli Studi di Genova, Italy} \email{conca@dima.unige.it}
\author{Daniele Faenzi}
\address{Institut de Math\'ematiques de Bourgogne,
UMR CNRS 5584,
Universit\'e de Bourgogne,
9 avenue Alain Savary -- BP 47870,
21078 DIJON CEDEX, France}
\email{daniele.faenzi@u-bourgogne.fr}

\subjclass[2000]{ }
\keywords{ }
\date{}
\maketitle
 
 \section{Introduction} The degree $\deg X$ of an irreducible non-degenerate projective variety $X\subset \PP^n$ over an algebraically closed field satisfies $$\deg X\geq 1+n-\dim X$$ and the variety $X$ is said to have minimal degree (or minimal multiplicity) if  $\deg X=1+n-\dim X$. 

Projective varieties of minimal degree are completely  classified by a famous result of Bertini and Del Pezzo, see the centennial account of Eisenbud and Harris  \cite{EH}.  They are: 
 
 \begin{enumerate}[1)] 
 \item quadric hypersurfices,
 \item the (quadratic) Veronese embedding of $\PP^2\to \PP^5$,
 \item the rational normal scrolls,  
 \end{enumerate} 
 or cones over them. 

A rational normal scroll of dimension $d$ is associated to a sequence of positive integers $a_1,\dots, a_d$ as it is explained,  for example in \cite{EH} or \cite{H}. We will denote by $S(a_1,\dots,a_d)$  the rational normal scroll associated with $a_1,\dots, a_d$.

Let $a_1,\dots, a_d$ be positive integers and let  $X=S(a_1,\dots,a_d)$ be the associated rational normal scroll with  $d>1$. Consider an hyperplane section $Y$ of  $X$ and assume $Y$  is irreducible. Hence $Y$ is an irreducible variety of minimal degree.  One can easily exclude that $Y$  is the Veronese surface in $\PP^5$ and hence it must be a rational normal scroll of dimension $d-1$. Therefore there exist integers $b_1,\dots,b_{d-1}$ such that  $Y=S(b_1,\dots,b_{d-1})$. How are the numbers  $a_1,\dots, a_d$ and $b_1,\dots,b_{d-1}$ related? This is the question we want to address. Indeed we present algebraic and geometric arguments  that give  a complete classification of the rational normal scrolls  that are hyperplane section of  a given rational normal scrolls. 

\section{Algebraic formulation}
Let us first formulate the question and discuss the  solution in algebraic terms. Consider the polynomial rings 
$$A=K[x,y]=\bigoplus _{i\in \NN} A_i$$ and 
$$B=A[s_1,\dots,s_d]$$ where $K$ is a field and $x,y,s_1,\dots,s_d$ are indeterminates.  
 The coordinate ring of  $S(a_1,\dots,a_d)$ is 
$$R(a_1,\dots,a_d)=K[A_{a_1}s_1,\dots, A_{a_d}s_d]\subset B.$$ 
We introduce a $\ZZ^2$-graded structure in $B$ by giving degree $(1,-a_i) $ to  $s_i$ and degree $(0,1)$ to $x$ and $y$. 
In this way,  $R(a_1,\dots,a_d)$ is identified  with   $\oplus_{j\geq 0} B_{(j,0)}$. An element of degree $1$ in $R(a_1,\dots,a_d)$ is an element of $B_{(1,0)}$ and hence has the form: 
 $$L=f_1s_1+\dots+f_ds_d$$
 where $f_i\in A_{a_i}$ for $i=1,\dots,d$. First we note: 

\begin{lemma}
\label{basic1}  Let $I=(f_1,\dots,f_d)\subset A$, $L=f_1s_1+\dots+f_ds_d $ and assume $L\neq 0$. Then  
the ideal $(L) \subset R(a_1,\dots,a_d)$ is prime if and only if  $A/I$ is Artinian i.e. the ideal  $I$ has codimension $2$ in $A$.  \end{lemma}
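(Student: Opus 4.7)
My plan is to use the $\ZZ^2$-grading on $B$ set up in the excerpt: since $R = \bigoplus_{j \geq 0} B_{(j, 0)}$ and $L$ is bihomogeneous of bidegree $(1, 0)$, the pivotal observation will be the contraction identity $LB \cap R = LR$. The inclusion $LR \subseteq LB \cap R$ is automatic, and for the reverse inclusion I would take a bihomogeneous $h \in LB \cap R$ of bidegree $(j, 0)$, write $h = L m$ with $m \in B$, and use bihomogeneity to conclude that $m \in B_{(j-1, 0)} = R_{j-1}$.

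For the forward implication, I would assume that $A/I$ is Artinian, which for a homogeneous ideal in $A = K[x,y]$ is equivalent to $\gcd(f_1, \dots, f_d) = 1$ in $A$. Since $L = \sum f_i s_i$ has $s$-degree $1$ in the polynomial extension $B = A[s_1, \dots, s_d]$, any non-unit factorization $L = L_1 L_2$ in $B$ would force one factor, say $L_1$, to lie in $A$; but then $L_1$ would divide every $f_i$ and hence divide their $\gcd$, which is $1$. Hence $L$ is irreducible in the UFD $B$, the ideal $LB$ is prime, and by the contraction identity $LR = LB \cap R$ is prime in $R$ as the contraction of a prime to a subring.

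For the converse I would argue contrapositively. If $A/I$ is not Artinian, then since $L \neq 0$ the ideal $I$ is nonzero of codimension $1$, so $g := \gcd(f_1, \dots, f_d)$ has $e := \deg g \geq 1$. Writing $f_i = g h_i$ with $\gcd(h_1, \dots, h_d) = 1$, we obtain the factorization $L = g \cdot L'$ in $B$ with $L' = \sum h_i s_i \in B_{(1, -e)}$. I then pass to $X := \operatorname{Proj} R \cong \PP(\bigoplus_i \O_{\PP^1}(a_i))$ with structure map $\pi \colon X \to \PP^1$ and observe that the hyperplane section $V(L)$ splits as the sum of divisors $\pi^{-1}(V(g)) + V(L')$. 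These two summands are distinct, since $\pi^{-1}(V(g))$ is a non-empty union of fibers of $\pi$, while $V(L')$ contains no full fiber --- such a containment would force every $h_i$ to vanish at some point of $\PP^1$, contradicting $\gcd(h_i) = 1$. Thus $V(L)$ is reducible, $R/LR$ is not a domain, and $LR$ is not prime.

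The step I expect to be the main obstacle is the distinctness claim in the backward direction: one must confirm that $V(L')$ really omits the fiber over each point of $V(g)$, so that the decomposition $V(L) = \pi^{-1}(V(g)) \cup V(L')$ genuinely exhibits two different irreducible components. This is exactly where the coprimality of the $h_i$ --- the remaining content of the hypothesis that $A/I$ is not Artinian --- gets used.
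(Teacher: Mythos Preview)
Your forward implication matches the paper's: the contraction $LB \cap R = LR$ (which the paper derives from $R$ being a direct $R$-module summand of $B$, the same bigrading observation you make) together with irreducibility of $L$ in the UFD $B$.

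For the converse the two arguments diverge. The paper stays purely algebraic: having observed that the only nontrivial factorizations of $L$ in $B$ are of the form $L = gL'$ with $g \in A$, it simply asserts that one can ``play with the factors'' to exhibit zero-divisors in $R/LR$ directly, without ever passing to $\operatorname{Proj}$. Your route through $X = \operatorname{Proj} R \cong \PP(\V)$ is a genuine alternative: the factorization $L = gL'$ becomes a splitting of Cartier divisors $V(L) = \pi^*V(g) + V(L')$ via $\O_X(\xi) \cong \O_X(eF) \otimes \O_X(\xi - eF)$, and coprimality of the $h_i$ prevents $V(L')$ from containing any fiber. One point worth adding to your outline is that $V(L')$ is nonempty --- since $\O_X(\xi - eF)$ restricts to $\O(1)$ on each fiber $\PP^{d-1}$ and $d \ge 2$, the section $L'$ vanishes somewhere on every fiber --- without which $V(L)$ could collapse to a single fiber and your reducibility claim would fail. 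Your approach trades some machinery (the identification $\operatorname{Proj} R \cong \PP(\V)$ and basic divisor theory on it) for geometric transparency; the paper's stays within elementary ring theory but leaves the explicit construction of zero-divisors in $R$ to the reader.
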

\begin{proof} Set $E=R(a_1,\dots,a_d)$. Note that $E$  is a direct summand (as $E$-module) of $B$ and hence $JB \cap E=J$ for every ideal $J$ of $E$. In particular $(L)E=(L)B\cap E$ and therefore $(L)E$  is prime in $E$    if  $(L)B$ is prime in $B$. Viceversa,  if $L$ factors in $B$ the one can easily play with the factors and show that  $(L)E$ is not prime. This show that  $(L)E$  is prime in $E$    if  and only  $(L)B$ is prime in $B$ if and only if $L$ is irreducible in $B$.  The only possible factorizations of $L$ are of type $L=g(f'_1s_1+\dots+f'_ds_d)$ with $g,f_1',\dots, f_s'\in A$ and the conclusion follows.  
\end{proof}

Consider the graded homomorphism of $A$-modules  
\begin{eqnarray}
\label{map}
\Phi:F=\bigoplus_{i=1}^d  A(-a_i) \to A
\end{eqnarray} 
with $\Phi(e_i)=f_i$. If at least one of the $f_i$'s is non-zero then the kernel of $\Phi$ is free of rank $d-1$, say isomorphic to 
$$G=\bigoplus_{i=1}^{d-1}  A(-b_i).$$

\begin{lemma} 
If  $I=(f_1,\dots, f_{d})$ has codimension $2$ in $A$  then 
$$a_1+\dots+a_d=b_1+\dots+b_{d-1}.$$
\end{lemma}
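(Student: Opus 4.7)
The plan is to deduce the equality $\sum a_i = \sum b_j$ by reading it off the Hilbert series of $A/I$, which is a finite-dimensional $K$-vector space by the codimension-$2$ hypothesis. First I would observe that, by the very construction of $G$, the sequence
\begin{equation*}
0 \to G \to F \xrightarrow{\Phi} A \to A/I \to 0
\end{equation*}
is a graded exact complex: the first map has image $\ker \Phi$ and $\Phi$ has image $I$. The fact that $\ker \Phi$ is free (as already asserted in the excerpt) is guaranteed because $A = K[x,y]$ is a polynomial ring in two variables; equivalently, $A/I$ has depth $0$, so by Auslander--Buchsbaum its projective dimension equals $2$ and a minimal free resolution has exactly the displayed shape.

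Next I would compute the Hilbert series as an alternating sum using the degree shifts read from the resolution:
\begin{equation*}
\HS(A/I, t) \;=\; \frac{1 - \sum_{i=1}^d t^{a_i} + \sum_{j=1}^{d-1} t^{b_j}}{(1-t)^2}.
\end{equation*}
Since $A/I$ is Artinian, $\HS(A/I, t)$ is in fact a polynomial, so $(1-t)^2$ must divide the numerator
\begin{equation*}
P(t) \;=\; 1 - \sum_{i=1}^d t^{a_i} + \sum_{j=1}^{d-1} t^{b_j}.
\end{equation*}

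Finally I would extract the claimed relation by differentiating at $t=1$. The evaluation $P(1) = 1 - d + (d-1) = 0$ accounts for the first factor of $(1-t)$, and then $P'(1) = -\sum_{i=1}^d a_i + \sum_{j=1}^{d-1} b_j = 0$ delivers exactly $\sum a_i = \sum b_j$. There is essentially no obstacle here: the entire content is that a polynomial Hilbert series, expressed as a rational function with denominator $(1-t)^{\dim A}$, forces the numerator to vanish to order at least $\dim A$ at $t=1$, and this vanishing to order $2$ supplies both the trivial identity (on the number of generators versus relations) and the nontrivial one (on the total degrees).
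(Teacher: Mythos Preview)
Your proposal is correct and follows essentially the same approach as the paper: compute the Hilbert series of $A/I$ from the free resolution $0\to G\to F\to A\to A/I\to 0$, observe it must be a polynomial since $A/I$ is Artinian, and then read off the identity from the vanishing of the numerator's derivative at $t=1$. The paper's argument is slightly terser (it does not pause to verify $P(1)=0$ or invoke Auslander--Buchsbaum), but the substance is identical.
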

\begin{proof} 
Using the data of the (possibly non-minimal) resolution

\begin{eqnarray}
\label{res}
0\to G\to F\to A\to 0
\end{eqnarray} 
of $A/I$ one gets the following expression for the Hilbert series of $A/I$: 

$$\HS(A/I,z)=\frac{1-\sum_{i=1}^d  z^{a_i}+\sum_{i=1}^{d-1}  z^{b_i}}{(1-z)^2}$$
Since $\HS(A/I,z)$ is a polynomial, the first derivative of the numerator must vanish at $1$. This gives the desired result. 
\end{proof}

With the notation above we have: 

 \begin{theorem}
 \label{basic2}   With the notation above assuming that $(L)$ is prime we have
 $$R(a_1,\dots,a_d)/(L)\simeq R(b_1,\dots,b_{d-1})$$
 as graded $K$-algebras. 
\end{theorem}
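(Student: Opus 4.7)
The plan is to construct an explicit graded $K$-algebra map $\bar{\phi}\colon R(a_1,\dots,a_d)/(L) \to R(b_1,\dots,b_{d-1})$ from the syzygy matrix of $(f_1,\dots,f_d)$ and then prove it is an isomorphism by a Hilbert-series argument. Let $M=(g_{ji})$ denote the $d\times(d-1)$ matrix over $A$ representing the map $G\to F$ in the resolution (\ref{res}), so $g_{ji}\in A_{b_i-a_j}$ (interpreted as $0$ when the index is negative) and $\sum_j f_j g_{ji}=0$ for every $i$. First I would extend to an $A$-algebra map $\phi\colon B\to A[t_1,\dots,t_{d-1}]$ by setting $\phi(s_j)=\sum_i g_{ji}t_i$. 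The syzygy relation then gives $\phi(L)=\sum_i t_i\sum_j f_j g_{ji}=0$. Because $fg_{ji}\in A_{b_i}$ for every $f\in A_{a_j}$, the map $\phi$ sends the generating set $A_{a_j}s_j$ of $R(a_1,\dots,a_d)$ into $\bigoplus_i A_{b_i}t_i\subset R(b_1,\dots,b_{d-1})$; hence $\phi$ restricts to a degree-preserving $K$-algebra homomorphism between the two scroll algebras, descending to the desired $\bar{\phi}$.

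Next I would compare Hilbert series. A routine computation (or the standard formula for varieties of minimal degree) yields $\HS(R(a_1,\dots,a_d),z)=(1+(e-1)z)/(1-z)^{d+1}$ with $e=\sum a_i$; since $(L)$ is prime, $L$ is a non-zero divisor in the domain $R(a_1,\dots,a_d)$, giving $\HS(R(a_1,\dots,a_d)/(L),z)=(1+(e-1)z)/(1-z)^d$. The identity $\sum a_i=\sum b_i$ from the previous lemma then shows that this equals $\HS(R(b_1,\dots,b_{d-1}),z)$. Consequently, once $\bar{\phi}$ is shown to be surjective, the matching Hilbert series will force it to be an isomorphism. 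Because $R(b_1,\dots,b_{d-1})$ is generated in degree one, surjectivity reduces to surjectivity in degree one, and since both degree-$1$ parts have $K$-dimension $e+d-1$, it is enough to prove that $\bar{\phi}$ is injective in degree one.

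The core step is therefore to show $\ker(\phi)\cap R(a_1,\dots,a_d)_1=K\cdot L$. A tuple $(u_j)\in\bigoplus_j A_{a_j}$ lies in this kernel exactly when $\sum_j u_j g_{ji}=0$ for every $i$, i.e., when $(u_j)\in\ker(M^T)$. To identify this kernel globally I would dualize the resolution $0\to G\to F\to A\to A/I\to 0$ of $A/I$ over $A=K[x,y]$. Since $A$ is a $2$-dimensional Gorenstein ring and $I$ has grade $2$, one has $\Ext^0_A(A/I,A)=\Ext^1_A(A/I,A)=0$; applying $\Hom_A(-,A)$ to the resolution yields the exact sequence $0\to A\xrightarrow{(f_j)}\Hom_A(F,A)\xrightarrow{M^T}\Hom_A(G,A)$, which is the Hilbert--Burch picture. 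Therefore $\ker(M^T)=A\cdot(f_1,\dots,f_d)$, and intersecting with $\bigoplus_j A_{a_j}$ forces the $A$-scalar to lie in $A_0=K$, giving $\ker(\phi)\cap R(a_1,\dots,a_d)_1=K\cdot L$ as required. This Hilbert--Burch step is the main obstacle; once it is in hand, the remainder is the Hilbert-series bookkeeping described above.
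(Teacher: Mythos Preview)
Your proposal is correct, and the construction of the map is exactly the paper's: the paper phrases it as the map of symmetric algebras $\Sym_A(F^*)\to\Sym_A(G^*)$ induced by the dual $F^*\to G^*$ of the syzygy inclusion, which in coordinates is precisely your $\phi(s_j)=\sum_i g_{ji}t_i$.

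The only genuine difference is in the endgame. Both reduce to showing the induced map is surjective in degree~$1$. The paper proves this by looking at the \emph{cokernel}: it identifies the cokernel of $F^*_0\to G^*_0$ with $\Ext^2_A(A/I,A)_0$ and shows this vanishes via the description $\Ext^2_A(A/I,A)\simeq (J{:}I/J)(-u-v)$ for a regular sequence $g,h\in I$ (citing \cite[Lemma~1.2.4]{BH}); injectivity then comes from the Krull-dimension argument for domains. You instead look at the \emph{kernel}: from $\Ext^1_A(A/I,A)=0$ you get $\ker(M^T)=A\cdot(f_1,\dots,f_d)$, whose degree-$0$ part is $K\cdot L$, so $\bar\phi$ is injective in degree~$1$; the matching degree-$1$ dimensions $e+d-1$ then force bijectivity there, and the Hilbert-series equality finishes. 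Your route is a bit more self-contained (no external lemma, only the grade-$2$ vanishing of $\Ext^1$ and a dimension count), while the paper's route yields the slightly stronger byproduct $\Ext^2_A(A/I,A)_j=0$ for all $j\ge -1$.
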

\begin{proof} 
Set $I=(f_1,\dots,f_d)$. Applying $\Hom(-, A)$ to (\ref{res}) we have: 

\begin{eqnarray}
\label{res1}
 0\to A^* \to F^* \to G^*\to 0
\end{eqnarray} 

where $A^*=A$ and 
 $$F^*=\Hom(F,A)=\bigoplus_{i=1}^d  A(a_i)$$  and 
 $$G^*=\Hom(G,A)=\bigoplus_{i=1}^{d-1}  A(b_i).$$
Since $I$ has codimension $2$, the complex  (\ref{res1}) has homology only in position $2$ (at $G^*$) and it is, by definition, 
$\Ext_A^2(A/I,A)$. 

Denoting by $s_1,\dots,s_d$ the basis elements of $F^*$, then  the symmetric algebra $\Sym_A(F^*)$ of $F^*$ (view as a $A$-module) can be  identified with the algebra 
$B=A[s_1,\dots,s_d]$. It is naturally a $\ZZ^2$-graded  $K$-algebra with grading introduced above, that is:  
$$
   \deg x=(0,1), \\ \deg y=(0,1)  \\ \deg s_1=(1, -a_1),\dots, \deg s_d= (1, -a_d),
$$
where the first index  identifies the corresponding  symmetric power and the second index is the internal degree. 
 Similarly, denoting by $t_1,\dots,t_{d-1}$ the basis elements of $G^*$  the symmetric algebra $\Sym_A(G^*)$ of $G^*$ can be identified with the polynomial ring $C=A[t_1,\dots,t_{d-1}]$ that is $\ZZ^2$-graded  by 
 $$ 
   \deg x=(0,1), \\ \deg y=(0,1)  \\ \deg t_1=(1, -b_1),\dots, \deg t_{d-1}= (1, -b_{d-1}).
$$

Moreover the  image of the map $A^* \to F^*$ is generated by  $L=f_1s_1+\dots+f_ds_d$. The map  $F^* \to G^*$ induces a map of symmetric algebras: 
$$B=\Sym_A(F^*) \to C=\Sym_A(G^*)$$ 
and, since $L$ is in the kernel of  $F^* \to G^*$, we have an induced $\ZZ^2$-graded $K$-algebra map: 
$$B/(L) \to C$$
(not surjective in general). Taking on both sides the subalgebra of the elements of degree $(*,0)$ we obtain a $\ZZ$-graded $K$-algebra map: 
\begin{eqnarray}
\label{omo}
\bigoplus_{i\in \NN}   [ B/(L) ]_{(i,0)} \to \bigoplus_{i\in \NN} C_{(i,0)}.
\end{eqnarray} 
 Now 
 $$\bigoplus_{i\in \NN}   [ B/(L) ]_{(i,0)}= [\bigoplus_{i\in \NN}  B_{(i,0)}] /(L) $$
 because $L$ has degree $(1,0)$. 
 Observe that $\bigoplus_{i\in \NN}  B_{(i,0)}$ is $K[A_{a_1}s_1,\dots, A_{a_d}s_d]$, that is, $R(a_1,\dots,a_d)$. Similarly   $\bigoplus_{i\in \NN} C_{(i,0)}$ is $K[A_{b_1}t_1, \dots, A_{b_{d-1}}t_{d-1}]$,  that is $R(b_1,\dots,b_{d-1})$.  
Therefore because of (\ref{omo}) we have a  $\ZZ$-graded $K$-algebra map: 
\begin{eqnarray}
\label{iso}
R(a_1,\dots,a_d)/(L) \to R(b_1,\dots,b_{d-1}).
\end{eqnarray} 
Both the rings involved in (\ref{iso}) are domains of Krull dimension $d$. Hence to prove that  (\ref{iso}) is an isomorphism, it is enough to prove that it is surjective. Being   standard graded $K$-algebras, it is enough to prove that  (\ref{iso}) is surjective in degree $1$ (i.e. degree $(1,0)$). Therefore it is enough to prove that the original map $F^*\to G^*$ is surjective in degree $0$, equivalently that $\Ext_A^2(A/I,A)_0=0$. 
Let $g,h$ be a regular sequence in $I$ of degree $u,v$ and set $J=(g,h)$. Then by the graded version of  \cite[Lemma 1.2.4]{BH} the module $\Ext_A^2(A/I,A)$ can be identified with 
$(J:I/J)(-u-v)$. It follows that $$\Ext_A^2(A/I,A)_j=0 \mbox{ for } j \geq -1.$$
\end{proof}

\begin{remark}
\label{rema} 
Given positive integers $a_1,\dots,a_d$ and $X=S(a_1,\dots,a_d)$ we have seen that the following conditions are equivalent: 

\begin{enumerate}[1)]
 \item $S(b_1,\dots,b_{d-1})$  is an hyperplane section of $X$. 
 \item There exists $f_1\in A_{a_1},\dots, f_d \in A_{a_d}$ such that the ideal $(f_1,\dots,f_d)$ has codimension $2$ and the kernel of  (\ref{map}) is generated by elements of  degree $b_1,\dots,b_{d-1}$. 
 \end{enumerate} 
 \end{remark}

  We give now a numerical characterization of the sequences $b_1,\dots,b_{d-1}$ satisfying the equivalent conditions of (\ref{rema}).

\begin{proposition} \
\label{bseq} Assume $a_1\leq a_2\leq \dots \leq a_d$. Then a sequence $b_1\leq \dots \leq b_{d-1}$  of   positive integers 
 satisfies  the equivalent conditions described in \ref{rema}  if and only if: 
\begin{enumerate}[(i)] 
\item \label{uno}  $a_1+\dots+a_d=b_1+\dots+b_{d-1}$,
\item \label{due} $a_j\leq b_j$ for every   $j=1,\dots,d-1$, 
\item \label{tre} Let $v=\min\{ j : a_j<b_j\}$. Then $b_j\geq a_{j+1}$ for every $j\geq v$.   
 \end{enumerate}
 \end{proposition}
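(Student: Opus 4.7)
\textit{Plan.} I will prove both directions; since (i) is already the content of the sum lemma preceding the proposition, the real content is (ii), (iii), and the existence of the $f_i$'s.

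\textit{Necessity.} The key tool is a rank-and-support argument on the first $j$ basis elements of $\ker\Phi$. If $b_j<a_j$, then for every $k\le j$ and every $i\ge j$ the degree $b_k-a_i$ is strictly negative, so the $k$-th basis element of $\ker\Phi$ has zero component on $e_i$ for $i\ge j$ and hence lies in the free submodule $\bigoplus_{i<j}A(-a_i)$ of rank $j-1$. Packing $j$ $A$-linearly independent vectors into a rank-$(j-1)$ free module is impossible, proving (ii). For (iii), the weaker hypothesis $b_j<a_{j+1}$ with $j\ge v$ now confines the first $j$ syzygies to $F'=\bigoplus_{i\le j}A(-a_i)$, which has rank exactly $j$; their $A$-independence forces the restriction $\Phi|_{F'}$ to have rank $0$, i.e.\ $f_1=\cdots=f_j=0$. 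Then $\ker\Phi\cong F'\oplus\ker\Phi''$, where $\Phi''$ is the restriction to the last $d-j$ summands, and applying the already-proved (ii) to the smaller codim-$2$ problem for $\Phi''$ forces its syzygy degrees $c_1,\dots,c_{d-j-1}$ to satisfy $c_1\ge a_{j+1}$. The sorted $b$-sequence is then the concatenation $(a_1,\dots,a_j,c_1,\dots,c_{d-j-1})$, so $b_i=a_i$ for $i\le j$ and $v>j$, contradicting $j\ge v$.

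\textit{Sufficiency.} I first reduce to the case $v=1$: when $v>1$, the truncated data $(a_v,\dots,a_d),(b_v,\dots,b_{d-1})$ satisfies (i)--(iii) with $v_{\mathrm{new}}=1$, and any solution of the reduced problem prepended by $f_1=\cdots=f_{v-1}=0$ solves the original (the trivial syzygies $e_1,\dots,e_{v-1}$ furnish the missing degrees $a_1=b_1,\dots,a_{v-1}=b_{v-1}$). For $v=1$ I will write down the explicit bidiagonal monomial matrix $M\in\mathrm{Mat}_{d\times(d-1)}(A)$ whose only non-zero entries are
\[
M_{k,k}=y^{b_k-a_k},\qquad M_{k+1,k}=x^{b_k-a_{k+1}},\qquad k=1,\dots,d-1;
\]
both exponents are non-negative by (ii) and (iii), and the identity $\sum_{k=1}^{d-1}(b_k-a_{k+1})=a_1$ deduced from (i) is what makes everything balance. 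Sparsity forces a unique non-zero matching in each $(d-1)\times(d-1)$ minor, yielding $f_i=\pm y^{e_i}x^{a_i-e_i}$ with $e_i=\sum_{k<i}(b_k-a_k)\in[0,a_i]$. In particular $f_1=x^{a_1}$ and $f_d=\pm y^{a_d}$, so $(f_1,\dots,f_d)$ contains the regular sequence $x^{a_1},y^{a_d}$ and has codimension~$2$. By Hilbert--Burch, the columns of $M$ then form a basis of $\ker\Phi$ of the prescribed degrees $b_1,\dots,b_{d-1}$, which is condition~(2) of Remark~\ref{rema}.

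\textit{Main obstacle.} The technically delicate point is the sufficiency: a naively chosen sparse matrix need not give a codim-$2$ ideal. For instance, with $(a_1,a_2,a_3)=(2,2,2)$ and $(b_1,b_2)=(3,3)$, the obvious ``top-row plus subdiagonal'' matrix $M_{1,k}=y^{b_k-a_1}$, $M_{k+1,k}=x^{b_k-a_{k+1}}$ produces the ideal $(x^2,xy,xy)=(x)(x,y)$ of codimension~$1$. The bidiagonal choice above is tuned precisely so that the two extreme minors $f_1$ and $f_d$ become pure powers of $x$ and $y$, which forces codimension~$2$ for free.
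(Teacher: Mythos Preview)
Your proof is correct. The sufficiency direction is essentially the paper's own argument: both build the same bidiagonal monomial syzygy matrix with diagonal entries $y^{b_k-a_k}$ and subdiagonal entries $x^{b_k-a_{k+1}}$; you simply peel off the trivial block for $j<v$ first (setting $f_1=\cdots=f_{v-1}=0$), whereas the paper keeps those columns in the matrix as unit columns. Your explicit computation of the extreme minors $f_1=x^{a_1}$ and $f_d=\pm y^{a_d}$ is a cleaner way to verify codimension~$2$ than the paper's terse ``the maximal minors of $Z$ are either $0$ or monomials of degree $a_1,\dots,a_d$''.

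Where you genuinely diverge is in the necessity direction. The paper first treats the case where the $f_i$ are minimal generators (there the Hilbert--Burch theorem forces $b_j>a_{j+1}$ for all $j$), and then argues that a general resolution is obtained from a minimal one by splicing in trivial complexes $0\to A(-c)\to A(-c)\to 0$; it then checks through a somewhat tedious case analysis that inserting the same integer $c$ into both sorted sequences preserves conditions (i)--(iii). Your rank-and-support argument bypasses all of this: the degree inequality $b_k-a_i<0$ confines the low-degree syzygies to a short free summand, and comparing ranks forces either a contradiction (for (ii)) or the vanishing $f_1=\cdots=f_j=0$ (for (iii)). The ensuing direct-sum decomposition $\ker\Phi\cong F'\oplus\ker\Phi''$ (which you might justify in one line by noting that $\ker\Phi''$ is graded free over $K[x,y]$, hence the short exact sequence splits) lets you read off $b_i=a_i$ for $i\le j$ and derive the contradiction $v>j$. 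This is more conceptual and avoids the paper's case distinctions entirely; the price is that it leans on the induction/recursion into the smaller problem for $\Phi''$, whereas the paper's argument, once the minimal case is in hand, is purely combinatorial.
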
 
 
 \begin{proof} First we prove that condition (2) of \ref{rema} implies (i)--(iii). Indeed (i) is already established. If  $f_1,\dots,f_d$ are minimal generators (of the ideal they generate) then  $a_{j+1}<b_j$ for every $j=1,\dots,d-1$ otherwise one of the maximal minor of the syzygy matrix will be $0$ contradicting the Hilbert-Burch theorem. Hence in that case (ii) and (iii) hold.    
 Any resolution is obtained from a minimal one by adding copies of the trivial complex $0\to A(-c)\to A(-c)\to 0$. 
 Hence it is enough to prove that if conditions (i)--(iii) hold for  $\aa=a_1\leq a_2\leq \dots \leq a_d$ and $\bb=b_1\leq \dots \leq b_{d-1}$ then they still hold also if we insert a given positive number $c$ in both $\aa$ and $\bb$. To do this we denote by  $\aa'$ and $\bb'$ the weakly increasing sequences the  obtained from  $\aa$ and $\bb$ by inserting $c$. We  distinguish two cases:  .  
 
 \medskip 
 
\noindent  Case (1)  $c< a_v$. Consider the smallest $u$ such that  $c <a_u$. If $u=1$ then $\aa'=c,\aa$ and  $\bb'=c,\bb$ that clearly satisfies the conditions  (i)--(iii). If $u>1$ then $a_{u-1}=b_{u-1}<c\leq a_u\leq b_u$ and hence 
 $$
 \begin{array}{ll}
 \aa'=a_1,\dots, a_{u-1}, c, a_u,\dots, a_{d-1}, a_{d} \\ 
 \bb'=b_1,\dots, b_{u-1}, c, b_u,\dots, b_{d-1}
 \end{array} 
 $$ 
 that clearly satisfies the conditions  (i)--(iii).  
 
\medskip 
 
\noindent  Case (2)  $c\geq a_v$. If $c\geq b_{d-1}$    then $c\geq a_d$ as well and $\aa'=\aa,c$ and  $\bb'=\bb,c$ that clearly satisfies the conditions  (i)--(iii). If $c<b_{d-1}$ let $u$ be smallest such that $c<b_u$.  If $u>v$ then we have  $b_{u-1}\leq c<b_u$  and $a_u\leq  b_{u-1}\leq c$. Let $t$ be the largest index with  $a_t\leq c$ ($t$ might be $d+1$). We have that $t\geq u$ and hence

$$ 
 \begin{array}{lllllllll}
 \aa'= \dots, a_{u-1}, & a_u, &   a_{u+1},& \dots, &a_t ,     & c,   &  a_{t+1}, \dots, a_d, a_{d+1}  \\ 
 \bb'= \dots, b_{u-1}, & c ,   &     b_u,    &  \dots, & b_{t-1}, &b_t, & b_{t+1}, \dots, b_{d}
 \end{array} 
$$
that clearly satisfies the conditions  (i)--(iii). One argues similarly in the remaining  case $u\leq v$. 
 
 Now we show that if conditions (i)--(iii) hold then   (2) of \ref{rema} holds. We set 
 $$\begin{array}{llll} 
&  \alpha_j=b_j-a_j & \mbox{ for }  & j=1,\dots, d-1  \\
 \mbox{ and }\\
 & \beta_j=b_j-a_{j+1}  & \mbox{ for } &  j=v,\dots, d-1 
\end{array}
$$
that, by assumption,  are all non-negative integers.  Consider the $d\times (d-1)$ matrix $Z=(z_{ij})$ whose entries are all $0$ with the exception of 
 
 $$\begin{array}{llll}
 & z_{jj}=y^{\alpha_j} & \mbox{ for } &  j=1,\dots, d-1 \\ \mbox{ and }  \\
 & z_{j+1,j}=x^{\beta_j}  & \mbox{ for } &  j=v,\dots, d-1.
 \end{array} 
$$
Then by constructions the maximal minors of $Z$ are either $0$ or  monomials  of  degree $a_1,\dots,a_d$ and the columns of the $Z$ generate their syzygy module. Keeping track of the degrees one checks that  the generators have exactly  degree $b_1,\dots,b_{d-1}$.
 \end{proof}

  \begin{example} 
  For example, if $d=6$ and $\aa=9, 10, 11, 11, 14, 14$ then 
 $\bb=9, 13, 13, 14, 20$ satisfies the conditions  (i)--(iii) of \ref{bseq} and $v=2$. 
$$
 \begin{array}{cccccccccccc}
\aa=& 9& \leq  & 10 & \leq & 11 & \leq &   11 & \leq  & 14 & \leq & 14\\
    &     ||   &     &  &     & &    & &    & &     \\
\bb=&  9& \leq  &   13& \leq  &  13& \leq  &  14& \leq  &  20
 \end{array}
 $$
 The matrix $Z$ is 
$$Z=\left( 
\begin{array}{ccccc} 
  1& 0& 0& 0& 0  \\
  0& y^3& 0& 0& 0 \\
  0& x^2& y^2& 0& 0 \\
  0& 0& x^2& y^3& 0 \\
  0& 0& 0& x^0& y^6 \\
  0& 0& 0& 0& x^6 
\end{array} 
\right) 
$$
and the the polynomials are
$$f_1=0,\  f_2=x^{10},\   f_3=x^8y^3,\   f_4=x^6y^5,\   f_5=x^6y^8,\      f_6=y^{14}. $$
 \end{example}

\section{A geometric point of view}

Let us reformulate the results in a geometric language. Given integers
$0\le a_1\le \ldots \le a_d$, define the vector bundle  over $\PP^1$
\[
\V = \bigoplus_{i=1}^d \O_{\PP^1}(a_i).
\]
Over the projective bundle $P=\PP(\V)$ we have a tautological relatively
ample line bundle which we denote by $\O_P(\xi)$. 
We identify: 
\begin{equation}
  \label{identificazione}
H^0(P,\O_P(\xi)) \simeq \bigoplus_{i=1}^d H^0(\PP^1,\O_{\PP^1}(a_i)).  
\end{equation}
Let us call $V$ the dual of the above vector space and $\pi$ the
projection $P \to \PP^1$.

Let $f$ be the morphism associated with the linear system
$|\O_P(\xi)|$. In view of \eqref{identificazione}, we write
\[
f \colon P \to \PP(V),
\]
and the image of $f$ is the variety $X$.
The morphism $f$ is birational onto its image. It is actually a closed embedding if and
only if $a_1 > 0$. We assume $a_1>0$ in this section.

Given   integers $b_1\le \cdots \le b_{d-1}$, define
\[
\W \simeq \bigoplus_{i=1}^{d-1} \O_{\PP^1}(b_i).
\]
Set $\O_{\PP(W)}(\eta)$ for the tautological relatively
ample line bundle over $\PP(\W)$ and $g$ for the morphism associated
to this line bundle.
The counterpart of Theorem \ref{basic2} is:

\begin{theorem} \label{geo}
  Given an irreducible hyperplane section $Y$ of $X$, there are
  integers $b_1\le \cdots \le b_{d-1}$ satisfying the 
  conditions of Proposition \ref{bseq} such that
  $Y$ is the image of $g$.
\end{theorem}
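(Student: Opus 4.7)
The plan is to translate the statement to the algebraic picture of Section~2 via \eqref{identificazione}, and then to realise $Y$ as the image of a projective subbundle of $P$ isomorphic to $\PP(\W)$. The hyperplane cutting out $Y$ corresponds to a non-zero global section $L \in H^0(P, \O_P(\xi)) = H^0(\PP^1, \V)$, which, by adjunction, is a morphism $L \colon \O_{\PP^1} \to \V$; dualising gives a sheaf map $\Phi \colon \V^\vee \to \O_{\PP^1}$, and this is precisely the map \eqref{map} of the algebraic section after identifying $A$ with $\bigoplus_j H^0(\PP^1, \O_{\PP^1}(j))$.

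The first step is to apply Lemma \ref{basic1}: irreducibility of $Y$ forces the ideal $I = (f_1, \ldots, f_d)$ to have codimension $2$ in $A$, so the $f_i$ share no zero on $\PP^1$ and $\Phi$ is a surjection of sheaves. By Grothendieck's splitting theorem, its kernel has the form $\W^\vee \simeq \bigoplus_{i=1}^{d-1} \O_{\PP^1}(-b_i)$, and dualising gives the short exact sequence
\[
0 \to \O_{\PP^1} \xrightarrow{L} \V \to \W \to 0
\]
with $\W \simeq \bigoplus_{i=1}^{d-1} \O_{\PP^1}(b_i)$. The integers $b_i$ are exactly the degrees appearing in the resolution \eqref{res}, so Proposition \ref{bseq} supplies conditions (i)--(iii) automatically.

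Finally, I would identify the vanishing locus $\tilde Y := Z(L) \subset P$ with the projective subbundle $\PP(\W) \hookrightarrow \PP(\V) = P$ induced by the surjection $\V \to \W$. The main step here---and the one which deserves most care---is that surjectivity of $\Phi$ makes $L$ a subbundle inclusion, so its cokernel $\W$ is locally free and $Z(L)$ is a genuine $\PP^{d-2}$-bundle over $\PP^1$ fibrewise equal to $\PP(\W)$; were $\Phi$ to drop rank at some point, $Z(L)$ would contain an entire fibre of $\pi$ and $Y$ would fail to be irreducible. Under this identification $\O_P(\xi)|_{\PP(\W)} = \O_{\PP(\W)}(\eta)$, and applying $\pi_*$ to the short exact sequence above (using $H^1(\PP^1, \O_{\PP^1}) = 0$) identifies $H^0(\W)$ with $H^0(\V)/\langle L\rangle$. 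Consequently the restriction of $f$ to $\tilde Y$ is the morphism associated to the complete linear system $|\O_{\PP(\W)}(\eta)|$, that is, $g$, and its image is $Y$.
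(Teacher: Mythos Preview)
Your argument is correct and follows essentially the same line as the paper's proof: both identify the hyperplane with a section $\O_{\PP^1}\to\V$, form the cokernel $\W$, and realise $Y$ as the image of $\PP(\W)$ under its tautological linear system. The only difference is that you import the algebraic input of Section~2 (Lemma~\ref{basic1} for codimension~$2$, Proposition~\ref{bseq} for conditions (i)--(iii)), whereas the paper argues torsion-freeness of the cokernel directly from irreducibility of $\PP(\W_0)$ via the splitting $\W_0\simeq\T\oplus\W_1$, and then re-derives (i)--(iii) geometrically from Chern classes and surjectivity of $\V\to\W$; your route is a bit shorter, the paper's is self-contained within the geometric section.
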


\begin{proof}
The surjection $V\otimes \O_P \to \V$ induces a closed embedding
$\PP(\V) \subset \PP^1 \times \PP(V)$ and the map $f$ is just the
composition of this embedding with the projection to the second factor.

A hyperplane section $Y$ of $X$ is determined by a non-zero global section $s$ of
$\O_P(\xi)$, where we think the hyperplane $H$ of $\PP(V)$ as $\PP(V/\langle s \rangle)$. By \eqref{identificazione}, the section $s$ corresponds to a map
\[
\O_{\PP^1} \to  \V.
\]

Write $\W_0$ for the cokernel of this map.
The section $s$ gives a surjection $(V/\langle s \rangle)
\otimes \O_{\PP^1} \to \W_0$ and therefore a morphism $g:\PP(\W_0) \to
H$ which is induced by $f : \PP(\V) \to \PP(V)$ upon restriction with
$H$. In other words, $Y=X \cap H$ is the image via $g$ of $\PP(\W_0)$
and $g=f|_{\PP(\W_0)}$ is associated with the line bundle
$\O_{\PP(\W_0)}(\eta_0)$, $\eta_0$ being the relatively ample
tautological line bundle of $\PP(\W_0)$.

Observe that, in order for $\PP(\W_0)$ to be irreducible, $\W_0$ has to
be torsionfree (and hence locally free). Indeed, let $\T$ be the maximal torsion subsheaf of
$\W_0$ and put $\W_1=\W_0/\T$. Since $\W_1$ is
locally free, we have $\Ext^1_{\PP^1}(\W_1,\T)=0$ so $\W_0 = \T \oplus
\W_1$. Then the projection $\W_0 \to \W_1$ gives an embedding $\PP(\W_1)
\subset \PP(\W_0)$ which shows that $\PP(\W_1)$ is the main component
of $\PP(\W_0)$. So we must have $\T=0$ for $\PP(\W_0)$ to be irreducible. 

We have thus proved that $\W_0 \simeq \W$ (and thus $\eta = \eta_0$)
for some integers $b_1\le \cdots \le
b_{d-1}$.
Then we look at the exact sequence
\[
0 \to \O_{\PP^1} \xrightarrow{s} \bigoplus_{i=1}^{d} \O_{\PP^1}(a_i) \xrightarrow{t} \bigoplus_{i=1}^{d-1} \O_{\PP^1}(b_i)
\to 0
\]

Condition \eqref{uno} of Proposition
\ref{bseq} is verified by computing the total first Chern
class. Condition \eqref{due} is clear, since otherwise the (lower
triangular matrix associated with the) map $t$
could not be surjective.
Also, if $b_j < a_{j+1}$ for some $j$, then the only summands of $\V$ mapping
to $\oplus_{i=1}^j\O_{\PP^1}(b_i)$ add up to
$\oplus_{i=1}^j\O_{\PP^1}(a_i)$. Since these two bundles have the same
rank and the restriction of $t$ is a surjective map among them, we get that
this map is an isomorphism so $a_i=b_i$ for all $i
\le j$. This proves condition \eqref{tre}.
\end{proof}

 \section{Generic case and examples}

\subsection{Generic case}
 
 What is the general hyperplane section of $S(a_1,\dots,a_d)$? 
This question is answered in \cite{B}. A different combinatorial description  of the solution to the problem is obtained by using Fr\"oberg's  characterization of generic Hilbert functions in $A=K[x,y]$.  

Given a formal power series $c(z)=\sum_{j\geq 0} c_i z^i\in \QQ[|z|]$ one sets 
$$\left [ c(z) \right ]_+= \sum_{j\geq 0}c'_i z^i\in \QQ[|z|]$$
where 
$c'_i=c_i$ if $c_j> 0$  for all $0\leq j\leq i$ and $c'_i=0$ otherwise. 
According to Fr\"oberg's result \cite{F},  given numbers $a_1,\dots,a_d$ and the ideal $I=(f_1,\dots,f_d)$ generated by general polynomials with $\deg f_i=a_i$, the Hilbert series of $A/I$ is given by: 
$$\left [ \frac{\prod_{i=1}^d (1-z^{a_i})}{(1-z)^2} \right ]_+$$
Hence the degrees $b_1,\dots,b_{d-1}$  of the syzygies of $f_1,\dots,f_d$ are obtained by the following formula: 
\begin{eqnarray}\label{gen}
\sum_{i=1}^{d-1} z^{b_i}=(1-z)^2\left [ \frac{\prod_{i=1}^d (1-z^{a_i})}{(1-z)^2} \right ]_+  +\sum_{i=1}^d z^{a_i}-1. 
\end{eqnarray} 

\subsection{Examples} Let us give some examples of the possible
irreducible linear sections of a specific 4-fold of minimal degree.

\begin{example} 
\label{gengen}
For example, if $d=4$ and $(a_1,a_2,a_3,a_4)=(4,5,6,9)$ then the Hilbert series of $A/I$ where $I$ is generated by general polynomials of degrees $4,5,6,9$  is 
\begin{align*}
&\left [ \frac{\prod_{i=1}^d (1-z^{a_i})}{(1-z)^2} \right ]_+=\\
&\left [1+2z+3z^2+4z^3+4z^4+3z^5+z^6-z^7\dots \right]_+=\\
&1+2z+3z^2+4z^3+4z^4+3z^5+z^6
\end{align*}
and applying (\ref{gen}) we obtain: 
\begin{align*}
&\sum_{i=1}^{d-1} z^{b_i}=\\
&(1-z)^2(1+2z+3z^2+4z^3+4z^4+3z^5+z^6) +z^4+z^5+z^6+z^9-1=\\
&z^7 + z^8 + z^9,
\end{align*}
that is, the degrees of the syzygies are $(7,8,9)$. In other words, the generic hyperplane section of $S(4,5,6,9)$ is $S(7,8,9)$. According to \ref{bseq} the rational normal scroll $S(4,5,6,9)$ has $15$ other (non-generic) irreducible hyperplane sections that correspond to the following sequences: 

$$
\begin{array}{lllll}
(4, 5, 15)   &  (4, 6, 14) & (4, 7, 13)  & (4, 8, 12) & (4, 9, 11) \\
(4, 10, 10) & (5, 6, 13) & (5, 7, 12)  & (5, 8, 11)  & (5, 9, 10) \\
(6, 6, 12)   &   (6, 7, 11) & (6, 8, 10) & (6, 9, 9) &  (7, 7, 10).
 \end{array} 
$$
\end{example} 

 \begin{example} Table of specializations for scrolls of codimension $5$  
 where we denote in red/dashed   the generic hyperplane sections and in blue/continuos  the non-generic hyperplane sections. 

$$
 \begin{tikzpicture}[scale=.8]

\node (51) at (0,10)      {$(1^6)$};

\node (41) at (0,8)      {$(1^4,2)$};

 \node (32) at (1,6)      {$(1^2,2^2) $};
 \node (31) at (-1,6)     {$(1^3,3)$};
 
  \node (23) at (2,4)       {$(2^3)$};
  \node (22) at (0,4)      {$(1,2,3)$};
   \node (21) at (-2,4)     {$(1^2,4)$};

 \node (13) at (2,2)      {$(3^2)$};
  \node (12) at (0,2)      {$(2,4)$};
   \node (11) at (-2,2)      {$(1,5)$};
   
    \node (0) at (0,0)      {$(6)$};

\draw [red, dashed, thick] (51) -- (41) -- (32) -- (23) -- (13)     ;
\draw [blue, thick] (41)  -- (31) -- (21) --(11)     ;
\draw [red, dashed,thick] (31) -- (22) -- (13)    ;
\draw [blue, thick] (32)  -- (22) -- (11)    ;
\draw [blue, thick] (32)  -- (21)     ;
\draw [red, dashed,thick] (21) -- (12) ; 
\draw [blue, thick] (22)  -- (12)     ;
\draw [blue, thick] (23)  -- (12)     ;
\draw [red, dashed,thick] (11) -- (0) ; 
\draw [red, dashed, thick] (12) -- (0) ; 
\draw [red, dashed, thick] (13) -- (0) ;
\end{tikzpicture}
$$
 \end{example}

\section{Cones and reducible sections}

We have now a rather complete knowledge of the behavior of irreducible
hyperplane sections of smooth varieties of minimal degree. So what
about reducible ones? What about singular varieties? Here we answer to
these two questions.

\subsection{Reducible hyperplane sections}

Take again $X = S(a_1,\ldots,a_d) \subset \PP^n$ for some  $1 \le a_1
\le \cdots \le a_d$.
\begin{theorem}
\label{redusect}
 Given a hyperplane section $Y = X \cap H$ of $X$, there are  $1 \le
 b_1\le \ldots \le b_{d-1}$ such that $Y=Y_0\cup Y_1\cup \cdots \cup Y_s$,
 where $Y_0=S(b_1,\ldots,b_{d-1})$,  $Y_i=H_i^{m_i}$ is structure of
 multiplicity $m_i$ on $H_i=\PP^{d-1} \subset H$, and:
\begin{enumerate}[(i)] 
\item \label{uno-bis}  $m_1+\cdots+m_s\leq a_{d}$ and $a_1+\dots+a_d=b_1+\dots+b_{d-1}+m_1+\cdots+m_s$,
\item \label{due-bis} $a_j\leq b_j$ for every   $j=1,\dots,d-1$, 
\item \label{tre-bis} If $a_j<b_j$ for some $j\leq d-1$ then let $v=\min\{ j : a_j<b_j\}$. Then $b_j\geq
  a_{j+1}$ for every $j\geq v$,
\item  \label{quattro} the restriction of $\pi$ to $Y_i$ is dominant if and only if $i=0$.
 \end{enumerate}
 Conversely, given  $1 \le b_1\le \ldots \le b_{d-1}$ and
 $m_1,\ldots,m_s$ satisfying the above conditions, there is a
 hyperplane section $Y$ of $X$ whose decomposition takes
 the form $Y=S(b_1,\ldots,b_{d-1}) \cup 
 H_1^{m_1} \cup \cdots \cup H_s^{m_s}$.
\end{theorem}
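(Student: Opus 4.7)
My plan is to extend the geometric approach of Theorem \ref{geo} by allowing the cokernel of the map $s \colon \O_{\PP^1} \to \V$ induced by a hyperplane section to have torsion. Since $a_1 \geq 1$, the morphism $f$ is a closed embedding, so $Y = X \cap H$ may be identified with the zero scheme $Z(s) \subset P = \PP(\V)$ of the section $s \in H^0(\O_P(\xi))$ corresponding to $H$. Under the identification \eqref{identificazione}, $s$ is a tuple $(s_1,\ldots,s_d)$ with $s_i \in H^0(\O_{\PP^1}(a_i))$.

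Let $g \in H^0(\O_{\PP^1}(m))$ be the greatest common divisor of the nonzero components of $s$ and factor $s_i = g \cdot f_i$, so that the $f_i$'s have no common zero and $\deg f_i = a_i - m$. Writing $g = \prod_{j=1}^{s}(x-\alpha_j)^{m_j}$ (up to a scalar) in affine coordinates, the saturation of the image of $s$ inside $\V$ is a line subbundle $\mathcal{L} \simeq \O_{\PP^1}(m)$, and the quotient $\W = \V/\mathcal{L}$ is locally free of rank $d-1$; write $\W \simeq \bigoplus_{i=1}^{d-1}\O_{\PP^1}(b_i)$ with $b_1 \leq \cdots \leq b_{d-1}$.

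The section $s$ then factors as $s = g \cdot s'$, where $s'$ is the section of $\O_P(\xi) \otimes \pi^{*}\O_{\PP^1}(-m)$ corresponding to the saturated inclusion $\mathcal{L} \hookrightarrow \V$. Consequently $Z(s) = Z(s') + \pi^{*}Z(g)$ as effective Cartier divisors on $P$: the first summand is the scroll $Y_0 = \PP(\W) \subset P$ of type $S(b_1,\ldots,b_{d-1})$, while $\pi^{*}Z(g) = \sum_{j=1}^{s} m_j\,\pi^{-1}(\alpha_j)$. Since $\O_P(\xi)$ restricts to $\O_{\PP^{d-1}}(1)$ on each fiber $\pi^{-1}(\alpha_j)$, the image $H_j = f(\pi^{-1}(\alpha_j)) \subset H$ is a linear $\PP^{d-1}$, and $Y_j = H_j^{m_j}$, giving the decomposition $Y = Y_0 \cup Y_1 \cup \cdots \cup Y_s$. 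Condition (iv) is automatic because $\pi$ contracts each $H_j$ while dominating $Y_0$.

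To obtain the numerical conditions, twist $0 \to \mathcal{L} \to \V \to \W \to 0$ by $\O_{\PP^1}(-m)$ to get $0 \to \O_{\PP^1} \to \bigoplus_{i}\O_{\PP^1}(a_i - m) \to \bigoplus_i \O_{\PP^1}(b_i - m) \to 0$, in which the first map is $(f_1,\ldots,f_d)$ and hence nowhere zero. Theorem \ref{geo} applied to the sequences $(a_i - m)$ and $(b_i - m)$ then gives the analogues of (i)--(iii) of Proposition \ref{bseq}, which translate directly into $\sum a_i = \sum b_i + \sum m_j$, $a_j \leq b_j$, and the $v$-condition on the $b_j$'s. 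The bound $\sum m_j = m \leq a_d$ is immediate since $g$ divides at least one nonzero $s_i$. For the converse, given $(b_i)$ and $(m_j)$ satisfying the conditions, I would produce $(f_i)$ of degrees $a_i - m$ via the explicit syzygy matrix $Z$ from the proof of Proposition \ref{bseq}, choose any $g$ of degree $m = \sum m_j$ with a zero of multiplicity $m_j$ at distinct points $\alpha_j$, and set $s_i = g \cdot f_i$. The main technical point to verify carefully is the scheme-theoretic---rather than merely set-theoretic or cycle-theoretic---equality $Z(s) = Z(s') + \pi^{*}Z(g)$, which is precisely what identifies the torsion multiplicities of the cokernel of $s$ with the geometric multiplicities $m_j$ of the linear fiber components $H_j$.
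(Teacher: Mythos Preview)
Your argument is correct and closely parallels the paper's own proof, though packaged a bit differently. The paper gives two viewpoints: a geometric one, where the (possibly torsion) cokernel $\W_0 = \V/s(\O_{\PP^1})$ is split as $\W_1 \oplus \T$ and $\PP(\W_0)$ is analysed directly (the rank-$1$ torsion is established via an $\mathcal{E}xt^1$ computation); and an algebraic one, where $L = gL'$ with $g = \GCD(f_1,\ldots,f_d)$, exactly your saturation/GCD step. Your route is essentially the algebraic argument recast as the Cartier-divisor identity $Z(s) = Z(s') + \pi^{*}Z(g)$; this is arguably cleaner than the torsion-cokernel analysis because it sidesteps $\PP$ of a non-locally-free sheaf, while still giving the multiplicities $m_j$ immediately.

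Two small points to tighten. First, when you ``apply Theorem~\ref{geo} to the sequences $(a_i - m)$ and $(b_i - m)$'', note that Theorem~\ref{geo} as stated assumes all $a_i>0$, whereas here some $a_i - m$ may be zero or negative (forcing $f_i = 0$). What you are really invoking is the \emph{proof} of conditions (i)--(iii) from the short exact sequence and the surjectivity of $t$, which indeed goes through regardless of signs; it is worth saying so explicitly. Second, for the converse the paper makes the same observation: with $c = \sum m_j$ one checks $a_v \ge c$ so that $a_i' = a_i - c \ge 0$ for $i \ge v$, sets $f_i' = 0$ for $i<v$, and notes that the matrix $Z$ of Proposition~\ref{bseq} still produces the required syzygy degrees even when the leading $a_i'$ is zero; you should record this, and also treat separately the degenerate case $a_j = b_j$ for all $j$ (where $v$ is undefined and one simply takes $L = g\,s_d$).
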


\begin{proof} 
  We start with the geometric view point. 
  We use the notation of Theorem \ref{geo} and of its proof, only this
  time we do not have $\W=\V/\O_{\PP^1}$ torsionfree. Its locally free part
  $\W_1$ is a direct sum of line bundles of the form
  $\O_{\PP^1}(b_i)$. 
  Its torsion part $\T$ is a direct sum of structure sheaves of
  distinct points $p_1,\ldots,p_s$ of $\PP^1$, taken with
  multiplicities $m_1,\ldots,m_s$. Indeed, we have seen that $\W
  \simeq \W_1 \oplus \T$, and dualizing 
  \[
  0 \to \O_{\PP^1} \to \V \to \W \to 0,
  \]
  we obtain the long exact sequence:
  \[
  0 \to \bigoplus_{i=1}^{d-1}\O_{\PP^1}(-b_i) \to
  \bigoplus_{i=1}^{d}\O_{\PP^1}(-a_i) \to \O_{\PP^1} \to
  \mathcal{E}xt_{\PP^1}^1(\T,\O_{\PP^1}) \to 0,
  \]
  which show that $\T$ has rank $1$ at ever point of its
  support. Therefore:
  \[
  \W \simeq \bigoplus_{i=1}^{d-1}\O_{\PP^1}(b_i) \oplus \bigoplus_{i=1}^{s}\O_{p_i^{m_i}}.
  \]

  Now, $\PP(\W)$ consists of the union of $\PP(\W_1)$ and of
  $\PP(\O_{p_i^{m_i}})$. The tautological linear system over $\PP(\W)$
  maps to
  $Y \subset H$ and sends
  $\PP(\W_1)$ to $Y_0=S(b_1,\ldots,b_{d-1})$, and
  $\PP(\O_{p_i^{m_i}})$ to a structure of multiplicity $m_i$ over the
  image $H_i$ of $\PP(\O_{p_i})$. This gives the required
  decomposition of $Y$.

  Computing the first Chern class  gives condition
  \eqref{uno-bis}. The conditions \eqref{due-bis} and \eqref{tre-bis}
  are proved exactly as we did in Theorem \ref{geo}. 
  Condition  \eqref{quattro} is clear since $\pi$ sends the whole
  component $H_i$ to $p_i$.
  
 For the algebraic point of view we use the notations introduced in the proof of Theorem \ref{basic2}. 
 In this case the form $L=\sum_{i=1}^d f_i s_i$ is  non-zero and factors as $L=gL'$ with $g=\GCD(f_1,\dots, f_d)$. Set $c=\deg g$.  Then $L'=\sum_{i=1}^d f_i's_i$ is irreducible and $\deg f_i'=a_i-c$ (by convention the polynomial $0$ as arbitrary degree). 
 Then  $(f_1,\dots,f_d)=g(f_1',\dots,f_d')$ and hence  the syzygy module of $f_1,\dots,f_d$  is the syzygy module of $f_1',\dots,f_d'$ up to a shift in degrees. Keeping track of the shifts we see that the syzygy module is free generated in  $b_1,\dots, b_{d-1}$  and  
 $\sum_{i=1}^{d-1} b_i+c=\sum_{i=1}^d  a_i$. The map \ref{iso} is still surjective but not injective. Indeed    $(L')/(L)$ is  the kernel of  the map $\Sym_A(F^*) \to \Sym_A(G^*)$. Hence we have an induced surjective $K$-algebra map $R(a_1\dots, a_d)/(L) \to R(b_1\dots, b_d)$  with kernel $(L')\cap R(a_1\dots, a_d)/(L)$ and this gives the irreducible component $Y_0$. The polynomial $g$ factors 
 $g=\ell_1^{m_1}\cdots \ell_s^{m_s}$ with  $\ell_i$ distinct linear forms,  so that $c=\sum_{i=1}^s m_i$.  Each factor $\ell_j^{m_j}$ give a component $Y_j$ of $Y$ corresponding to the quotient ring $R_j=R(a_1\dots, a_d)/J_j$ with $J_j=(\ell_j^{m_j}) \cap R(a_1\dots, a_d)$. To analyze the structure of $R_j$ we may assume $\ell_j=x$ and set $u=m_j$. One has $J_j=\sum_{i=1}^d (x^u A_{v_ia_i-u} s_i^{v_i})$ where $v_i$ is the upper integral  part of $u/a_i$ and  its  radical is $\sum_{i=1}^d (x A_{a_i-u} s_i)$. So 
 $R(a_1\dots, a_d)/\sqrt{J_j}\simeq K[y^{a_1}s_1,\dots, y^{a_d}s_d]$ and the latter is a polynomial ring.  So the reduced structure of $Y_j$ is a $\PP^{d-1}$. That the multiplicity of $Y_j$ is $u$ follows easily form the fact that $K[x,y]/(x^u)$ has multiplicity $u$.

 Now we prove the converse. Assume that numbers $a_1,\dots, a_d$, $b_1,\dots, b_{d-1}$ and $m_1,\dots, m_s$ satisfying the  the three numerical conditions. If $b_j>a_j$ for some $j$ we set Set $c=m_1+\dots+m_s$. We have $a_i=b_i$ for $i<v$ and hence 
 $\sum_{i=v}^d a_i =c+ \sum_{i=v}^{d-1} b_i$. It follows that 
$$a_v=c+\sum_{i=v+1}^d (b_{i-1}-a_i) \geq c.$$
Set $a'=a_v',\dots, a_{d}'$  and $b'=b_v',\dots, b_{d-1}'$ with $a_i'=a_i-c$ and   $b_i'=b_i-c$. 
Then the sequences $a'$ and $b'$ satisfy the conditions of \ref{bseq}  with the only exception of the fact that $a_v'$ can be $0$ while in  \ref{bseq}  it is assumed to be positive. However one can check that the construction given in \ref{bseq}  works also if some of the $a_i$ are $0$. 
Therefore the construction given in  \ref{bseq}  produce homogeneous  polynomials $f_v',\dots, f_d'$ such that $\GCD(f_v',\dots,f_d')=1$, $\deg f_i'=a_i'$ and the syzygy module of  $f_v',\dots,f_d'$ is free with generators in degree $b_v',\dots, b_{d-1}'$. Now we set $f_i'=0$ for $i=1,\dots, v-1$,  $L'=\sum_{i=1}^d f_i's_i$, $g=\ell_1^{m_1}\cdots \ell_s^{m_s}$ with  $\ell_i$ distinct linear forms in $A$ and finally $L=gL'$.  Then keeping track of the shifts one has that the syzygy module of $f_1,\dots, f_d$ is freely generated by elements of degree $b_1,\dots,b_{d-1}$. We have seen on the first part of the proof how the factorization of $L$ determines the decomposition of the hyperplane section $Y$ of $X$ with the hyperplane defined by $L$. So we conclude that $Y$ has the desired decomposition.  

In the remaining case $a_j=b_j$ for all $j=1,\dots, d-1$  one has $a_d=m_1+\cdots m_s$ and we may take  
$L=gs_d$ with $g=\ell_1^{m_1}\cdots \ell_s^{m_s}$ with  $\ell_i$ distinct linear forms in $A$.   
  \end{proof} 
  
  We illustrate the construction   in one example: 
  
  \begin{example} Consider $a=(2,5,7,10)$, $b=(2,7,11)$, $m_1=1$ and $m_2=3$. Then the numerical conditions of \ref{redusect} are satisfied and hence there exists an hyperplane section $Y=X\cap H$ of $X=S(2,5,7,10)$  such that $Y=Y_0\cap Y_1\cap Y_2$ with $Y_0=S(2,7,11)$, $Y_1=\PP^3$ and $Y_2$ a structure of degree $3$ on a $\PP^3$. 
  To describe a linear form $L$ that define $H$ we proceed as described in the proof of \ref{redusect}. Here $v=2$ and $c=m_1+m_2=4$ so that $a'=(a_2-c,a_3-c,a_4-c)=(1,3,6)$ and $b'=(b_2-c,b_3-c)=(3,7)$. Then $f_2',f_3',f_4'$ are defined, up to sign, as the $2$-minors of the matrix 
  
  $$\left(\begin{array}{cc}
  y^2 & 0 \\
  1  & y^4 \\
  0  & x
  \end{array}
 \right)$$
 i.e. $f_2',f_3',f_4'=x, xy^2, y^6$,  and $L'=\sum_{i=2}^4  f_i's_i$ and $g=\ell_1\ell_2^3$ with $\ell_1,\ell_2$ distinct linear forms, for example we may take $\ell_1=x$ and $\ell_2=y$. Then 
 $$L=gL'=xy^3(xs_2+xy^2s_3+y^6s_4)=x^2y^3s_2+x^2y^5s_3+xy^9s_4.$$ 
 The hyperplane $H$ is defined by $L=0$. 
  \end{example}

   \begin{example}  We have seen \ref{gengen}   there are  $16$ different $3$-dimensional scrolls that appear  as irreducible hyperplane section of  $X=S(4,5,6,9)$. Obviously they have all degree $24$. According to \ref{redusect}
 we may list all the   $3$-dimensional scrolls that appear as irreducible components of  reducible hyperplane  
 sections of $X$. There are $71$ such scrolls, they are  described in the following table where the first column denotes the degree  and the second the number of different scrolls of that degree.  
 
 $$\begin{array}{lll}
 23 & 13 & [4, 5, 14],[4, 6, 13],[4, 7, 12],[4, 8, 11],[4, 9, 10],[5, 6, 12],[5, 7, 11],\\
      &       &[5, 8, 10],[5, 9, 9],[6, 6, 11],[6, 7, 10],[6, 8, 9],[7, 7, 9],\\
      \hline 
22  & 10 &   [4, 5, 13],[4, 6, 12],[4, 7, 11],[4, 8, 10],[4, 9, 9],[5, 6, 11],[5, 7, 10],\\
     &       &   [5, 8, 9],[6, 6, 10],[6, 7, 9], \\
         \hline  
21 & 7 &  [4, 5, 12],[4, 6, 11],[4, 7, 10],[4, 8, 9],[5, 6, 10],[5, 7, 9],[6, 6, 9],\\
    \hline  
 20 & 4 & [4, 5, 11],[4, 6, 10],[4, 7, 9],[5, 6, 9],\\
     \hline  
 19 & 2 & [4, 5, 10],[4, 6, 9],\\
     \hline  
  18 & 1 & [4, 5, 9],\\
      \hline 
15 & 1 & [4, 5, 6],
\end{array}
$$

 \end{example} 
\subsection{Cones}

 We mentioned in the introduction that a singular irreducible
 variety $X$ of minimal degree
 is cone over a non-singular one. What happens to the hyperplane
 sections of $X$ if $X$ is indeed
 a cone?  Geometrically, this amounts to allow some of the $a_i$ to
 vanish.
 Let us call $X^0$ the base of the
 cone, which is a smooth variety of minimal degree sitting in a linear
 subspace $\PP^{n_0} \subset \PP^n$, defined by the vanishing of $n-n_0$
 linear forms. We may assume that these forms are $x_{n_0+1},\ldots,x_n$. Given a set of defining equations
 of $X^0$ in $\PP^{n_0}$ in the variables $x_0,\ldots,x_{n_0}$, the variety $X$ is defined in $\PP^n$ by the
 same set of equations, seen as equations in the variables  $x_0,\ldots,x_{n}$.
 This expresses the fact $X$ is a cone over $X_0$, the vertex being
 the subspace $M\subset \PP^n$ of codimension $n_0+1$ defined by the
 vanishing of $x_0,\ldots,x_{n_0}$.
 We may replace $\PP^{n_0}$ with any other linear subspace of
 dimension $n_0$ disjoint
 from $M$ to obtain an equivalent description.

 \begin{remark}
   A hyperplane section $Y$ of $X$ is isomorphic to:
   \begin{enumerate}[(i)]
   \item  \label{conovuoto} $X^0$ if $M \cap H = \emptyset$; 
   \item \label{cono} a cone over $X^0$ with vertex $M\cap H$ if $M \cap H \ne \emptyset$ and $M \not\subset H$; 
   \item \label{riconduce} a cone over $X^0 \cap H$ with vertex $M$ if $M \subset H$.
   \end{enumerate}
 \end{remark}

 \begin{proof}
   We have $M \subset H$ if and only if the defining equation of $M$
   depends on the variables $x_0,\ldots,x_{n_0}$ only, in which case the
   ideal of $Y$ is the ideal of $Y^0 = H \cap X^0$ in the variables
   $x_0,\ldots,x_{n_0}$, seen as an ideal of $K[x_0,\ldots,x_n]$, i. e.
   $Y$ is a cone over $Y^0$ with vertex on $M$.

   If $M \not\subset H$, then $H$ is spanned by $H\cap M$ and a
   subspace of dimension $n_0$, disjoint from $M$. Choosing this space
   as our $\PP^{n_0}$, we get that $Y$ is the cone over $X^0$ with
   vertex $M\cap H$, which gives cases \eqref{cono} and
   \eqref{conovuoto}, according to whether $M$ is positive-dimensional
   or consists of a single point.
 \end{proof}

This settles the situation for cones, as clearly one of the three
cases must occur, and, in case \eqref{riconduce}, the
isomorphism type of $X^0 \cap H$ is controlled by Theorem \ref{basic2}.

\end{document}